\newtheorem{theorem}{Theorem}[section]
\newtheorem{corollary}{Corollary}[section]
\newtheorem{lemma}{Lemma}[section]
\theoremstyle{definition}
\newtheorem{definition}{Definition}[section]
\newtheorem{remark}{Remark}[section]
\newcommand{\beql}[1]{\begin{equation}\label{#1}}
\newcommand{\eeq}{\end{equation}}
\newcommand{\comment}[1]{}
\newcommand{\Abs}[1]{{\left|{#1}\right|}}
\newcommand{\Norm}[1]{{\left\|{#1}\right\|}}
\newcommand{\Qed}{\hfill\mbox{$\Box$}}
\newcommand{\Set}[1]{{\left\{{#1}\right\}}}
\newcommand{\RR}{{\mathbb R}}
\newcommand{\CC}{{\mathbb C}}
\newcommand{\ZZ}{{\mathbb Z}}
\newcommand{\QQ}{{\mathbb Q}}
\newcommand{\one}{{\mathbbm{1}}}
\newcommand{\inner}[2]{{\langle #1, #2 \rangle}}
\newcommand{\supp}{{\rm supp\,}}
\newcommand{\ft}[1]{\widehat{#1}}
\newcounter{rem}
\newcounter{step}
\newcounter{othm}
\def\theothm{\Alph{othm}}
\newenvironment{othm}{
  \em
  \vskip 0.10in
  \refstepcounter{othm}
  \noindent{\bf Theorem\ \theothm}
}
\newcounter{mysec}
\newcounter{mysubsec}[mysec]
\begin{document}

\sloppy

\title[Spectrality of two line segments]{Spectrality of a measure consisting of two line segments}

\author{Mihail N. Kolountzakis}
\address{\href{http://math.uoc.gr/en/index.html}{Department of Mathematics and Applied Mathematics}, University of Crete,\\Voutes Campus, 70013 Heraklion, Greece,\newline and \newline \href{https://ics.forth.gr/}{Institute of Computer Science}, Foundation of Research and Technology Hellas, N. Plastira 100, Vassilika Vouton, 700 13, Heraklion, Greece}
\email{kolount@gmail.com}

\author{Sha Wu}
\address{\href{http://math.uoc.gr/en/index.html}{Department of Mathematics and Applied Mathematics}, University of Crete,\\Voutes Campus, 70013 Heraklion, Greece.\newline and \newline
\href{https://math.hnu.edu.cn/index.htm}{School of Mathematics, Hunan University}, Changsha 410082, People’s Republic of China}
\email{shaw0821@163.com}

% Temporary hack until amsart gets updated to include 2020 Classification
\makeatletter
\@namedef{subjclassname@2020}{\textup{2020} Mathematics Subject Classification}
\makeatother
\subjclass[2020]{42C15, 42C30}

\keywords{Spectrality, symmetric additive measures, projections}

\begin{abstract}
Take an interval $[t, t+1]$ on the $x$-axis together with the same interval on the $y$-axis and let $\rho$ be the normalized one-dimensional Lebesgue measure on this set of two segments. Continuing the work done by Lev \cite{lev2018fourier}, Lai, Liu and Prince \cite{lai2021spectral} as well as Ai, Lu and Zhou \cite{ai2023spectrality} we examine the spectrality of this measure for all different values of $t$ (being spectral means that there is an orthonormal basis for $L^2(\rho)$ consisting of exponentials $e^{2\pi i (\lambda_1 x + \lambda_2 y)}$). We almost complete the study showing that for $-\frac12<t<0$ and for all $t \notin \QQ$ the measure $\rho$ is not spectral. The only remaining undecided case is the case $t=-\frac12$ (plus space). We also observe that in all known cases of spectral instances of this measure the spectrum is contained in a line and we give an easy necessary and sufficient condition for such measures to have a line spectrum.
\end{abstract}

\date{January 20, 2025}
\thanks{The author Sha Wu is also supported by Hunan Provincial Innovation Foundation for Postgraduate(LXBZZ2024024).}
\maketitle

\tableofcontents

%%%%%%%%%%%%%%%%%%%%%%%%%%%%%%%%%%%%%%%%%%%
%%%%%%%%%%%%%%%%%%%%%%%%%%%%%%%%%%%%%%%%%%%
\section{Introduction}\label{s:intro}
\subsection{Spectrality and the Fuglede Conjecture}\label{ss:spectrality}
Let $\mu$ be a Borel probability measure on $\mathbb{R}^{d}$ with compact support $\Omega$. The measure $\mu$ is called \emph{spectral} if  there exists  a countable set $\Lambda\subset \mathbb{R}^{d}$  such that 
$$
E_\Lambda:=\{e^{2\pi i \lambda\cdot x}:\lambda\in \Lambda\}  \ \text{ forms an orthogonal basis for}  \ L^{2}(\mu),
$$
i.e.,  there exists  a countable set $\Lambda\subset \mathbb{R}^{d}$  such that 
$$
f(x)=\sum_{\lambda \in  \Lambda} c_{\lambda}(f) e_\lambda(x), \text{ with } e_\lambda(x) = e^{2\pi i \lambda\cdot x},
$$
for any $f\in\ L^{2}(\mu)$, where
$$
c_{\lambda}(f) = \inner{f}{e_\lambda}_\mu = \int  f(x)e^{-2 \pi i \lambda\cdot x}\, d\mu(x).
$$
In this case, we  call $\Lambda$ a spectrum of $\mu$.
In particular, if $\mu$ is the Lebesgue measure restricted on the set $\Omega$ of Lebesgue measure 1,  then we say $\Omega$ is a spectral set.  (The definition is trivially extended to finite nonnegative Borel measures, not necessarily probability measures, and sets of finite Lebesgue measure.)

Spectral sets were first introduced by Fuglede\cite{fuglede1974operators} who proposed the {\it Fuglede Conjecture} (also known as {\it Spectral Set Conjecture}).

{\bf{Fuglede Conjecture:}} $\Omega\subset \mathbb{R}^d$ is a  spectral set  if and only if  it tiles $\mathbb{R}^d$  by translation.
The set $\Omega $ is said to tile $\mathbb{R}^{d}$ by translations if there exists a discrete set $L\subset \mathbb{R}^d $ such that  $$\bigcup_{l \in L}(\Omega+l)=\mathbb{R}^d \quad \text { and } \quad m((\Omega+l_1) \cap (\Omega+l_2))=0 \  \text { for all }l_1 \neq l_2\in L,$$ where $m(\cdot)$ denotes the Lebesgue measure and $L$ is called a tiling complement of $\Omega$.

The classical example in dimension 1 is the interval $\Omega=[0,1]$ which is both a spectral set and  a translational tile. More precisely, the set $\Lambda=\mathbb{Z}$ serves simultaneously as a spectrum and a tiling complement of $\Omega$, and $f(x)=\sum_{n \in \mathbb{Z}} c_{n}(f) e^{2 \pi i n x}$ for any $f\in\ L^{2}([0,1])$. For more general examples,  Fuglede \cite{fuglede1974operators,fuglede-ball} proved that the conjecture is true in the case of a triangle or a disk in the plane (both sets are neither spectral nor tiles), and he also proved that $\Omega$ can tile with a lattice tiling complement $L$ if and only if the dual lattice $L^*$ is a spectrum for $\Omega$.
 
It has been proved \cite{tao2004fuglede,kolountzakis2006tiles,kolountzakis2006hadamard,farkas2006onfuglede,farkas2006tiles} that both directions of the conjecture are not valid when the dimension is at least 3, but  the conjecture is still open in both directions  in dimensions 1 and 2. Although the conjecture is not correct in high dimensions, it has been an important topic of research and there are many positive and negative results about the relation of tiling to spectrality. For example, an important result   was  recently proved by Lev and Matolcsi\cite{lev2022fuglede}, who showed that the conjecture holds in any dimension for a convex body. For some cyclic groups, this conjecture is also true if some appropriate conditions are restricted, and more results can be found in \cite{laba2002spectral, zhang2023fuglede,malikiosis2017fuglede,kiss2022fuglede}. See also the recent survey \cite{kolountzakis2024orthogonal} for a more thorough description of the problem and the existing results.
  
The goal of the paper is to study the spectrality of a class of so-called {\it symmetric additive measures}. They are described in the next section.

\subsection{Symmetric additive measures. Previous work.}\label{ss:measures}
Recall that a Borel measure $\mu$  is continuous if $ \mu(\{x\})=0 $ for all $x\in\mathbb{R}$.
 
\begin{definition}\label{de(1-1)}Let $\mu$ be a continuous  Borel measure on $\mathbb{R}$.   The symmetric additive measure for $\mu$ is the
probability measure $\rho$ on $\RR^2$ given by
$$
\rho= \mu\times \delta_0 + \delta_0\times\mu ,
$$
where $\delta_0$ is the Dirac measure at $0$. 
\end{definition}

The study of exponential bases for additive measures was initiated in \cite{lev2018fourier} and continued in \cite{lai2021spectral} and in \cite{ai2023spectrality}.

We are interested in the special case where $\mu$ is  Lebesgue measure where  $\mu$ is one-half of Lebesgue measure supported on the unit interval $[t, t+1]$ (see Fig.\ \ref{fig:rho}). When one wants to know the spectrality or not of this measure it is enough, by the symmetry of the problem, to consider only the cases $t \ge -\frac12$ of the parameter $t$.

The following are some of the results proved in \cite{lai2021spectral, ai2023spectrality} about this measure.

\begin{othm} \label{T(1-1)}
{\rm (\cite{lev2018fourier,lai2021spectral,ai2023spectrality})}
\ Let $\rho=\mu\times \delta_0 + \delta_0\times\mu$ be a  symmetric additive measure, where the measure $\mu$ is one-half of Lebesgue measure supported on $[t, t+1]$.
\begin{enumerate}
\item  If $-\frac{1}{2}<t<0$ and  $2t+1=\frac{1}{a}$, where $a >1$ is a positive integer, then  $\rho$ is not spectral.
\item If  $t\in  \mathbb{Q}  \setminus\{-\frac{1}{2}\}$, then $\rho$ is a spectral measure if and only if $t\in \frac{1}{2}\mathbb{Z}$. In this case, $\rho$ has a unique spectrum  of the form
$$\Lambda=\{(\lambda,-\lambda): \lambda\in\Lambda_0\},$$
where $\Lambda_0$ is the spectrum of the Lebesgue measure supported on $[-t-1, -t]\cup[t, t+1]$.
\item If $E_{\Lambda}$ is a Riesz basis for a symmetric additive space $L^2(\rho)$, then at least one of $E_{\Lambda_{x}}$, $E_{\Lambda_{y}}$ is not a Riesz basis for the component space $L^2(\mu)$, where $\Lambda_{x}$, $\Lambda_{y}$ are the projections of $\Lambda$ onto the $x-$, and $y-$ axis.
\item For $t=-1/2$ (``plus space'') any frame of exponentials cannot have frequencies contained in a straight line.
\item The measure $\rho$ admits a frame of exponentials for all $t$.
\end{enumerate}
\end{othm}

\begin{figure}[h]
\ifdefined\SMART\resizebox{10cm}{!}{\input 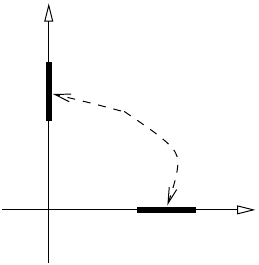_t}\else
\input rho.pdf_t
\fi
\caption{The symmetric additive measure we consider in this paper. $\rho$ is a probability measure. On each of the two unit-length segments it equals 1/2 Lebesgue measure. The segments may or may not intersect. By symmetry it is enough to consider the cases $t \ge -\frac12$.}\label{fig:rho}
\end{figure}

\subsection{Main results}\label{ss:main-results}

The following answers Question 1 of \cite[\S 7]{lai2021spectral} except for the case $t = -1/2$ (``plus space'', according to \cite{lai2021spectral}), which remains open. Note that the case $t=0$ (the $L$-shape of \cite{lai2021spectral}) does have a spectrum, the set $\Set{(n/2, -n/2),\ \ n \in \ZZ}$, as shown in \cite{lai2021spectral}, and also in our Section \ref{s:projections}.

\begin{theorem}\label{th:intersecting}
If $\rho$ is the probability measure $\mu\times\delta_0+\delta_0\times\mu$, where $\mu$ is one-half Lebesgue measure on the interval $[t, t+1]$ with $t \in (-\frac12, 0)$, then $\rho$ is not spectral.
\end{theorem}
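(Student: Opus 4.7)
The plan is to suppose $\Lambda\subset\RR^2$ is a spectrum for $\rho$, translate so that $(0,0)\in\Lambda$, and exploit that $\Lambda-\Lambda\setminus\Set{(0,0)}\subseteq Z(\hat\rho)$. With $s:=t+\tfrac12\in(0,\tfrac12)$, the Fourier transform is
\[
\hat\rho(\xi,\eta)=\tfrac12\pp{e^{-2\pi is\xi}\operatorname{sinc}(\xi)+e^{-2\pi is\eta}\operatorname{sinc}(\eta)},\qquad \operatorname{sinc}(u):=\tfrac{\sin\pi u}{\pi u}.
\]
A direct check shows $\hat\rho$ has no nonzero zero on either coordinate axis: along $\eta=0$, vanishing would require $\Abs{\operatorname{sinc}(\xi)}=1$, hence $\xi=0$, where the two summands add to $2$ and not $0$. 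Consequently the two coordinate projections $\Lambda_x,\Lambda_y$ are injective, and $\Lambda$ is the graph of some bijection $\psi\colon\Lambda_x\to\Lambda_y$.

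Next I would extract frame information. Applying the Parseval identity of $\Lambda$ to test functions $f(x,y)=g(x)\cdot\One{y=0}$ supported on the $x$-axis, with $g\in L^2[t,t+1]$, yields $\sum_{\alpha\in\Lambda_x}\Abs{\hat g(\alpha)}^2 = 2\Norm{g}^2_{L^2[t,t+1]}$, i.e.\ $\Set{e^{2\pi i\alpha x}:\alpha\in\Lambda_x}$ is a tight exponential frame of $L^2[t,t+1]$ with frame bound $2$. Equivalently, $\Lambda_x$ is a $2$-tiling of $\RR$ by the unit interval $[0,1]$ (and the same holds for $\Lambda_y$). By the classification of multi-tilings of $\RR$ by an interval, such a $\Lambda_x$ must be of the form $(\ZZ+a_1)\cup(\ZZ+a_2)$ with $a_1\not\equiv a_2\pmod{\ZZ}$, and similarly $\Lambda_y=(\ZZ+b_1)\cup(\ZZ+b_2)$. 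Moreover $\operatorname{sinc}(p)=0$ for $p\in\ZZ\setminus\Set{0}$ forces the zero condition $\hat\rho(p,q)=0$ with $p\in\ZZ\setminus\Set{0}$ to entail $q\in\ZZ\setminus\Set{0}$; hence $\psi$ must send each coset of $\Lambda_x$ bijectively onto a coset of $\Lambda_y$.

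The final step combines these constraints. Within each coset, orthogonality is automatic (both sincs vanish); across cosets, $\hat\rho((a_1-a_2)+(k-k'),\,(b_1-b_2)+(\sigma_1(k)-\sigma_2(k')))=0$ must hold for \emph{every} $k,k'\in\ZZ$, where $\sigma_i$ encodes how $\psi$ acts on the $i$-th coset. Because $\Abs{\operatorname{sinc}(\eta)}\to 0$ as $|\eta|\to\infty$, for each fixed value of $k-k'$ only finitely many values of $\sigma_1(k)-\sigma_2(k')$ can satisfy orthogonality, which forces $\psi$ to be affine on each coset. The resulting identity
\[
e^{-2\pi is(A+n)}\operatorname{sinc}(A+n)=-e^{-2\pi is(B+\varepsilon n+c)}\operatorname{sinc}(B+\varepsilon n+c),\qquad n\in\ZZ,
\]
with $A=a_1-a_2$, $B=b_1-b_2$, $\varepsilon\in\Set{\pm 1}$, $c\in\ZZ$, pins down $A,B,c$ upon comparing moduli $\Abs{\operatorname{sinc}(A+n)}=\Abs{\operatorname{sinc}(B+\varepsilon n+c)}$ for all $n$, and matching phases then forces $2s\in\ZZ$, contradicting $s\in(0,\tfrac12)$. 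The main obstacle will be the affineness step; once that is established, the remaining modulus/phase analysis is a short algebraic calculation.
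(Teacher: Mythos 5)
There is a genuine gap, and it occurs exactly where your argument gets its structural leverage. From the Parseval identity applied to functions living on one segment you correctly obtain that $\Lambda_x$ gives a tight exponential frame for $L^2[t,t+1]$ with constant $2$. But this is \emph{not} ``equivalently a $2$-tiling of $\RR$ by the unit interval'': it is equivalent to the level-$2$ tiling
$$
\sum_{\alpha\in\Lambda_x}\Abs{\ft{\one_{[-\frac12,\frac12]}}}^2(x-\alpha)=2 ,
$$
i.e.\ a tiling by the function $\bigl(\frac{\sin\pi x}{\pi x}\bigr)^2$, which is supported on all of $\RR$, not by the set $[0,1]$. Multi-tilings by the \emph{indicator} of an interval are indeed unions of translates of $\ZZ$ (an easy jump-counting argument), but no such classification is available for level-$2$ tilings by the squared sinc, and you cannot invoke ``the classification of multi-tilings of $\RR$ by an interval'' for it. In fact, even extracting periodicity of $\Lambda_x$ from this tiling is nontrivial: the paper, in the irrational case, must first prove a finite-complexity property of $\Lambda_1$ (using the two-dimensional geometry of $Z(\ft\rho)$, Lemma \ref{l:finite-complexity}) before it can apply the Iosevich--Kolountzakis periodicity theorem, and even then it only concludes $\Lambda_1=T_1\ZZ+A$ with $T_1\in\frac12\ZZ$ and $A$ finite --- a much weaker structure than $(\ZZ+a_1)\cup(\ZZ+a_2)$. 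So the step on which your whole coset bookkeeping rests is unsupported. Beyond that, the ``affineness of $\psi$ on each coset'' is, as you yourself note, not established (the observation that only finitely many $\sigma_1(k)-\sigma_2(k')$ values are admissible for each fixed $k-k'$ does not by itself force affineness), and the final modulus/phase computation that is supposed to yield $2s\in\ZZ$ is not carried out, so even the endgame is unverified.

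For comparison, the paper's proof avoids any classification of $\Lambda_x$ altogether. From the factorization of the zero set \eqref{zero-set} one gets $\Lambda-\Lambda\subseteq H_1\cup H_2$ with $H_1=\ZZ^2$ and $H_2=\Set{\lambda_1-\lambda_2\in\frac{1}{2t+1}\ZZ}$; the Greenfeld--Lev lemma on unions of two subgroups plus the exclusion of $\Lambda\subseteq\ZZ^2$ (Remark \ref{rem:z2}) gives $\Lambda\subseteq H_2$ (Theorem \ref{th:lines}). Every exponential $e_\lambda$ with $\lambda\in H_2$ is periodic with period vector $T=(2t+1,-(2t+1))$, and since $-\frac12<t<0$ the two points $(1+2t,0)$ and $(0,1+2t)$ lie in the interiors of the two segments and differ exactly by $T$; a smooth bump equal to $1$ at one point and $0$ near the other, whose expansion converges absolutely (coefficient decay plus bounded density of $\Lambda_1$), then gives $1=0$. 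If you want to salvage your approach, the missing ingredient is precisely some substitute for Theorem \ref{th:lines} or for the lattice structure of $\Lambda_x$; as it stands, the proposal does not prove the theorem.
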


This Theorem proves the non-spectrality of all the cases where the two segments intersect except the case when the two segments intersect in their midpoint. We prove Theorem \ref{th:intersecting} in Section \ref{s:intersecting}.

All the spectra of the measure $\rho$ that are known \cite{lai2021spectral,ai2023spectrality}, for any value of $t$,  belong to the straight line $y = -x$. Our next result describes precisely when it is possible for $\rho$ to have a spectrum contained in a straight line. (Notice we can always assume the straight line goes through the origin as all spectra can be translated to contain the origin.)

Let $L$ be a straight line through the origin and $u$ be a unit vector along $L$. Let us also denote by $u^\perp$ the orthogonal subspace to $L$ (a straight line also). We denote by $\pi_L$ the orthogonal projection operator onto line $L$ (but taking values in $\RR$). In other words $\pi_L(v) = t$ for any $v \in t u + u^\perp$.

If $\nu$ is a Borel measure on $\RR^2$ then the projection of $\nu$ onto $L$ is the measure $\pi_L\nu$ on $\RR$ defined by
$$
\pi_L\nu(E) = \nu(E u + u^\perp),
$$
where $E \subseteq \RR$.

%By Fubini's theorem it is easy to see that
%\begin{equation}\label{fourier-projection}
%\ft{\pi_L\mu}(\xi) = \ft{\mu}(\xi u),\ \ \ (\xi \in \RR).
%\end{equation}

\begin{theorem}\label{th:projections}
Suppose $\rho$ is a probability measure on $\RR^2$ whose support is a finite union of line segments. Suppose also that $L$ is a straight line through the origin such that the orthogonal projection $\pi_L$ onto $L$ is one-to-one $\rho$-almost everywhere.

Then $\rho$ has a spectrum $\Lambda u \subseteq L$ if and only if the projection measure $\pi_L\rho$ has spectrum $\Lambda \subseteq \RR$.
\end{theorem}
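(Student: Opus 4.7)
The plan is to exhibit an explicit unitary isomorphism $T \colon L^2(\pi_L\rho) \to L^2(\rho)$ that carries each exponential $e_\lambda(t) = e^{2\pi i \lambda t}$ on $\RR$ to the exponential $e_{\lambda u}(x) = e^{2\pi i \lambda u\cdot x}$ on $\RR^2$. Once such $T$ is in hand, the two spectrum conditions become literally the same statement, namely that a specific family is an orthonormal basis of a Hilbert space (note that both $\rho$ and $\pi_L\rho$ are probability measures, so orthogonality and orthonormality of exponentials coincide).

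First I would define $T$ by the pullback $Tg = g \circ \pi_L$. The pushforward identity $\int h\, d(\pi_L\rho) = \int (h\circ \pi_L)\, d\rho$, applied with $h=|g|^2$, gives the isometry property $\Norm{Tg}_{L^2(\rho)} = \Norm{g}_{L^2(\pi_L\rho)}$. For surjectivity, choose a Borel set $A \subseteq \supp\rho$ of full $\rho$-measure on which $\pi_L$ is injective. Because $\supp\rho$ is a finite union of line segments, one may take $A$ to be a finite union of sub-segments, and on each of them $\pi_L$ is the restriction of an affine map which, being injective, is an affine bijection onto an interval with an explicit continuous affine inverse. Given $f\in L^2(\rho)$, set $g(\pi_L(v)) = f(v)$ for $v\in A$ and $g=0$ on $\RR\setminus \pi_L(A)$; this $g$ is Borel, lies in $L^2(\pi_L\rho)$ by the same change of variables, and satisfies $Tg = f$ $\rho$-almost everywhere. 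Hence $T$ is a unitary isomorphism.

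For the final step, observe that by definition of the projection, $\pi_L(x) = u\cdot x$ for every $x\in\RR^2$, so
$$
(Te_\lambda)(x) = e_\lambda(\pi_L(x)) = e^{2\pi i \lambda\, u\cdot x} = e_{\lambda u}(x).
$$
A unitary operator sends orthonormal bases to orthonormal bases, so $\{e_\lambda : \lambda\in\Lambda\}$ is an ONB of $L^2(\pi_L\rho)$ if and only if $\{e_{\lambda u} : \lambda\in\Lambda\}$ is an ONB of $L^2(\rho)$, which is exactly the equivalence claimed.

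I expect no serious obstacle beyond careful bookkeeping; the one slightly delicate point is the Borel measurability of the function $g$ constructed in the surjectivity step. In the generality of a continuous injection on a Borel set this is Lusin--Souslin, but in the present piecewise-affine setting it is immediate because $\pi_L^{-1}$ is described explicitly on each sub-segment.
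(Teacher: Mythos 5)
Your proposal is correct and follows essentially the same route as the paper: both arguments rest on the unitary identification of $L^2(\rho)$ with $L^2(\pi_L\rho)$ given by composition with $\pi_L$ (you go in the direction $g \mapsto g\circ\pi_L$, the paper writes $f = \widetilde f(u\cdot x)$, which is the inverse of the same map), together with the observation that $e_{\lambda u} = e_\lambda \circ \pi_L$. Your treatment of surjectivity and measurability via the piecewise-affine inverse is more explicit than the paper's, but it is the same proof.
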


We prove Theorem \ref{th:projections} in Section \ref{s:projections}. We also show there that all known cases of spectral measures $\rho$ of the above type are simple consequences of Theorem \ref{th:projections} and known results about one-dimensional spectral sets such as the characterization in \cite{laba2001twointervals} of which sets that are unions of two intervals are spectral (exactly those that tile).

In the negative direction again we complete the study of this problem by proving the following.
\begin{theorem}\label{th:t-irrational}
If $\rho$ is the probability measure $\mu\times\delta_0+\delta_0\times\mu$, where $\mu$ is one-half Lebesgue measure on the interval $[t, t+1]$ with $t \notin \QQ$, then $\rho$ is not spectral.
\end{theorem}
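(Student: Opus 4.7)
Set $\alpha=2t+1$, irrational by hypothesis. A direct integration gives $\widehat{\rho}(\xi,\eta)=\tfrac{1}{2}(\phi(\xi)+\phi(\eta))$ with $\phi(s)=e^{-\pi i\alpha s}\operatorname{sinc}(s)$, so any spectrum $\Lambda\ni 0$ of $\rho$ satisfies $\Lambda-\Lambda\subseteq Z(\widehat{\rho})\cup\{0\}$. Taking absolute values of the equation $\phi(a)+\phi(b)=0$ forces $|\operatorname{sinc}(a)|=|\operatorname{sinc}(b)|$; comparing phases and using $\alpha\notin\QQ$, every nonzero zero of $\widehat{\rho}$ is seen to be either of \emph{type~A} (both coordinates nonzero integers) or of \emph{type~B} (neither coordinate an integer and $a-b=k/\alpha$ for some $k\in\ZZ\setminus\{0\}$, together with a sinc-sign constraint).

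The plan is first to leverage this rigid structure to prove that $\Lambda$ must lie on a straight line through the origin, and then to reduce to a one-dimensional problem via Theorem~\ref{th:projections}. A case analysis on pairs $\lambda,\mu\in\Lambda\setminus\{0\}$ of possibly different types does the first step: pairing a type-A element with a type-B element with slope parameter $k$ forces the type-A element onto the diagonal (because the resulting $\lambda_1-\lambda_2=(k+k')/\alpha$ must be an integer), pairing two type-B elements on distinct slope-lines $\{a-b=k_i/\alpha\}$ pins down their relative position by the irrationality of $\alpha$, and pairing two type-A elements places them in a sublattice of $\ZZ^2$. Applying Parseval to functions supported on only one of the two segments shows in addition that both projections $\Lambda_x$ and $\Lambda_y$ are tight frames of exponentials for $L^2(\mu)$ of frame constant~$2$, and hence cannot be contained in $\ZZ$; combining these frame constraints with the arithmetic restrictions above collapses $\Lambda$ onto a single line through~$0$.

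Once $\Lambda\subseteq L$ for a line $L$ through the origin, apply Theorem~\ref{th:projections}. For $t>0$ the two segments in $\supp\rho$ are disjoint and the orthogonal projection $\pi_L$ onto any non-axis-parallel line is one-to-one $\rho$-almost everywhere, so Theorem~\ref{th:projections} identifies the spectrality of $\rho$ along $L$ with that of the pushforward $\pi_L\rho$. The latter is proportional to Lebesgue measure on two disjoint equal-length intervals whose left endpoints are separated by $2t+1$ times the common length; by the characterisation of two-interval spectral sets in \cite{laba2001twointervals}, such a measure is spectral iff it tiles $\RR$ by translations, and the classical tiling result for two-interval tiles of the line in turn forces $2t+1$ to be a positive integer. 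Since $\alpha=2t+1$ is irrational this is impossible, yielding the contradiction for $t>0$. The case $t\in(-\tfrac12,0)$ is already contained in Theorem~\ref{th:intersecting}, and $t<-\tfrac12$ follows by the symmetry noted in the introduction.

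The main obstacle is the reduction-to-a-line step. The zero set of $\widehat{\rho}$ is very thin---the punctured integer lattice together with a countable family of slope-$1$ analytic curves---but turning this thinness into the geometric statement that $\Lambda$ is one-dimensional requires a genuinely arithmetic argument exploiting both $\alpha\notin\QQ$ and the tight-frame constraint on each coordinate projection. Once this step is in hand, the subsequent projection-plus-tiling argument is a direct application of results already in the paper or classical.
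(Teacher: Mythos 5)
There is a genuine gap at the crux of your plan: the claim that the arithmetic structure of $Z(\widehat{\rho})$ ``collapses $\Lambda$ onto a single line through $0$'' is never proved, and the case analysis you sketch does not yield it. Pairing a type-A point with a type-B point does force integer points onto the diagonal (that much is fine, and it is essentially the dichotomy the paper gets from the two subgroups $H_1, H_2$), but pairing two type-B points on distinct lines $\{x-y=k_i/\alpha\}$ gives only that their difference is again a type-B zero with parameter $k_1-k_2$, together with a sinc-modulus identity; this does not ``pin down their relative position'' and in particular does not force collinearity. No elementary pairwise-difference argument of this kind is known to produce a one-dimensional conclusion here, and indeed the paper never proves that a spectrum for irrational $t$ would have to lie on a line. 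Instead it takes a completely different route: Parseval on one segment gives a level-2 tiling of $\RR$ by $|\widehat{\mathbbm{1}_{[-1/2,1/2]}}|^2$ translated along $\Lambda_1$ (your tight-frame observation is exactly this, and it is the easy part); the comparability of coordinates plus the lines $x-y=k/(2t+1)$ show this tiling has finite complexity; the Iosevich--Kolountzakis periodicity theorem for finite-complexity tilings with a spectral gap then makes $\Lambda_1$ and $\Lambda_2$ periodic with periods in $\tfrac12\ZZ$; taking fractional parts of $2(\lambda_2-\lambda_1)=2k/(2t+1)$ shows only finitely many $k$ can occur, while a separate lemma (using irrationality) shows each line $x-y=k/(2t+1)$ carries at most one point of $\Lambda$, so $\Lambda$ would be finite --- a contradiction. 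The periodicity machinery is precisely the ingredient your plan lacks, and you yourself flag this step as the ``main obstacle,'' so what you have is a programme rather than a proof.

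Two smaller problems in your endgame, should the collinearity step ever be supplied: if $\Lambda$ lay on the line $y=x$, Theorem~\ref{th:projections} is not applicable (the projection onto that line is two-to-one on $\supp\rho$, not injective $\rho$-a.e.), so that case needs a separate argument (e.g.\ the observation of Lemma~\ref{l:non-diagonal} that such exponentials only reach symmetric functions); and for a general line $L$ other than $y=\pm x$ the pushforward $\pi_L\rho$ is \emph{not} two equal-length intervals with equal densities, so you must first invoke Theorem~\ref{th:abs-cont} to rule out non-constant densities before the two-interval tiling characterisation of \cite{laba2001twointervals} can be applied; only for $L:\ y=-x$ does your ``separation $=(2t+1)\times$ length'' computation and the resulting contradiction with $2t+1\notin\ZZ$ go through as stated.
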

Notice that the only case that is left undecided after our results is the case $t=-1/2$, the plus space. In Section \ref{s:t-irrational} we prove Theorem \ref{th:t-irrational}.

In Section \ref{s:zero-set} we describe some properties of the zero set of the Fourier Transform of the measure $\rho$ which will be useful in the proofs that follow.

{\bf Acknowledgement:} The first author would like to thank Chun-Kit Lai for showing him the problem and related results during a visit in November 2023.

%%%%%%%%%%%%%%%%%%%%%%%%%%%%%%%%%%%%%%%%%%%
%%%%%%%%%%%%%%%%%%%%%%%%%%%%%%%%%%%%%%%%%%%
\section{The zero set and the spectrum}\label{s:zero-set}

Let $\rho$ be the measure $\mu\times\delta_0+\delta_0\times\mu$, where $\mu$ is one-half of Lebesgue measure on $[t, t+1]$, for some $t\in\RR$ so that $\rho$ is a probability measure. The set of zeros of $\ft{\rho}$ is easily \cite{lai2021spectral,ai2023spectrality} seen to be the set
\begin{align}
Z(\rho) &= \Set{\lambda: \ft{\rho}(\lambda) = 0} \nonumber \\
 &= \Set{\lambda=(\lambda_1, \lambda_2): e^{\pi i (\lambda_1-\lambda_2)(2t+1)} \frac{\sin\pi\lambda_1}{\pi\lambda_1} = - \frac{\sin\pi\lambda_2}{\pi\lambda_2}}.\label{zero-set}
\end{align}
(Notice that the value of the function $\frac{\sin \pi x}{\pi x}$ at $0$ is $1$.)

Suppose $\rho$ is spectral with spectrum $\Lambda \subseteq \RR^2$, with $0 \in \Lambda$. Assume also that $t \neq -1/2$, so that we exclude from our discussion in this section the case of the plus space.
We conclude that
\begin{equation}\label{two-groups}
\Lambda \subseteq \Lambda-\Lambda \subseteq \Set{0} \cup Z(\rho) \subseteq H_1 \cup H_2
\end{equation}
where the two \textit{subgroups} $H_1, H_2$ of $\RR^2$ are
$$
H_1 = \ZZ^2,\ \ H_2 = \Set{(\lambda_1, \lambda_2):\ \lambda_1-\lambda_2 \in \frac{1}{2t+1}\ZZ}.
$$
Indeed, for $\lambda = (\lambda_1, \lambda_2)$ to be in $Z(\rho)$ the factor $e^{\pi i (\lambda_1-\lambda_2)(2t+1)}$ must be real or both sines must be 0. The second case means that $\lambda \in \ZZ^2$ while the first case implies that $(\lambda_1-\lambda_2)(2t+1)$ must be an integer. The group $H_2$ consists of equispaced parallel lines perpendicular to the line $y=-x$.

It follows from Lemma 11.4 in \cite{greenfeld2017fuglede} that
$$
\Lambda \subseteq H_1 \text{ or } \Lambda \subseteq H_2.
$$
From Theorem 4.2 in \cite{lai2021spectral} (see also our Remark \ref{rem:z2} below) it follows that $\Lambda \subseteq H_1 = \ZZ^2$ is not true, so we conclude that $\Lambda \subseteq H_2$.

We have proved:
\begin{theorem}\label{th:lines}
If $\rho$ is the probability measure $\mu\times\delta_0+\delta_0\times\mu$, where $\mu$ is one-half Lebesgue measure on the interval $[t, t+1]$, with $t \neq -1/2$, and $\rho$ is spectral with spectrum $\Lambda \subseteq \RR^2$ and $0 \in \Lambda$ then for every $\lambda = (\lambda_1, \lambda_2) \in \Lambda$ there exists an integer $k(\lambda)$ such that
\begin{equation}\label{lambda-diff}
\lambda_2-\lambda_1 = \frac{k(\lambda)}{2t+1}.
\end{equation}
\end{theorem}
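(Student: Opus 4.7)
The plan is to formalize the informal argument already sketched in the paragraphs preceding the statement. The reduction proceeds in four short steps, each of which is either a direct computation or an application of a known lemma.

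First, I would pin down the zero set of $\ft\rho$. Starting from the explicit formula \eqref{zero-set}, the equation $e^{\pi i (\lambda_1-\lambda_2)(2t+1)}\frac{\sin\pi\lambda_1}{\pi\lambda_1} = -\frac{\sin\pi\lambda_2}{\pi\lambda_2}$ forces the exponential factor to be real, because the right-hand side is real. So either both sines vanish simultaneously, which puts $\lambda=(\lambda_1,\lambda_2)$ in $H_1=\ZZ^2$, or $(\lambda_1-\lambda_2)(2t+1)\in\ZZ$, which puts $\lambda$ in $H_2=\{(\lambda_1,\lambda_2):\lambda_1-\lambda_2\in\frac{1}{2t+1}\ZZ\}$. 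Since $t\ne -1/2$ the quantity $2t+1$ is nonzero and both $H_1$ and $H_2$ are genuine subgroups of $\RR^2$, and we have the inclusion $\{0\}\cup Z(\rho)\subseteq H_1\cup H_2$.

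Second, I would use orthogonality. Since $\Lambda$ is a spectrum and $0\in\Lambda$, distinct elements $\lambda,\lambda'\in\Lambda$ satisfy $\inner{e_\lambda}{e_{\lambda'}}_\rho = \ft\rho(\lambda'-\lambda) = 0$, so $\Lambda-\Lambda\subseteq\{0\}\cup Z(\rho)$, and in particular (taking $\lambda'=0$) $\Lambda\subseteq H_1\cup H_2$.

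Third, I would invoke Lemma 11.4 of \cite{greenfeld2017fuglede}, which states that a subset of $\RR^2$ contained in the union of two closed subgroups is contained in a single coset of one of them; since $0\in\Lambda$, this coset must be the subgroup itself. Hence $\Lambda\subseteq H_1$ or $\Lambda\subseteq H_2$. The former is excluded by Theorem~4.2 of \cite{lai2021spectral} (see also Remark~\ref{rem:z2}), which rules out any spectrum of $\rho$ contained in $\ZZ^2$. Therefore $\Lambda\subseteq H_2$, which is precisely the desired conclusion \eqref{lambda-diff}.

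The argument is essentially mechanical once the two imported ingredients are in hand. The only mild point of care is verifying the hypotheses of the Greenfeld--Lev lemma (both $H_i$ are honest subgroups, and the assumption $t\ne -1/2$ is exactly what makes $H_2$ a proper subgroup with a well-defined inverse slope scale $1/(2t+1)$); the rest is bookkeeping.
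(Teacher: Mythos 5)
Your proposal is correct and follows essentially the same route as the paper: the inclusion $\Lambda\subseteq\Lambda-\Lambda\subseteq\Set{0}\cup Z(\rho)\subseteq H_1\cup H_2$ from orthogonality, the dichotomy $\Lambda\subseteq H_1$ or $\Lambda\subseteq H_2$ via Lemma 11.4 of \cite{greenfeld2017fuglede}, and the exclusion of $\Lambda\subseteq\ZZ^2$ by Theorem 4.2 of \cite{lai2021spectral} (or Remark \ref{rem:z2}). No gaps worth noting.
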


\begin{remark}\label{rem:z2}
The fact that the multiplicity of $\Lambda$ is 1 (all points of $\Lambda$ project uniquely onto the coordinate axes) is the easy part of Theorem 4.2 in \cite{lai2021spectral}, while the fact that $\Lambda$ cannot be a subset of $\ZZ^2$ is more involved in \cite{lai2021spectral} and uses the proof of Theorem 1.2 therein. Let us show here an easy proof of the latter result using the fact that the multiplicity is 1.

Take the function $f \in L^2(\rho)$ which is 1 on the horizontal segment and 0 on the vertical segment. It does not matter if the two segments of $\rho$ intersect as they will intersect on at most one point which has $\rho$-measure 0. If $0 \in \Lambda \subseteq \ZZ^2$ is a spectrum of $\rho$ then $f = \sum_{\lambda \in \Lambda} \inner{f}{e_\lambda} e_\lambda$. But
\begin{align*}
\inner{f}{e_\lambda} &= \int f(x, y) e^{-2\pi i(\lambda_1 x + \lambda_2 y)} \,d\rho(x, y)\\
&= \frac12\int_t^{t+1} e^{-2\pi i \lambda_1 x}\,dx\\
&= \begin{cases} 1/2 & \text{ if } \lambda_1 = 0, \\ 0 & \text{ otherwise.} \end{cases}
\end{align*}
The multiplicity of $\Lambda$ being 1 means that there is at most one point of $\Lambda$ with $\lambda_1=0$, therefore this point is the origin. From the expansion of $f$ with respect to $\Lambda$ it follows that $f$ is constant (the series has one term only), a contradiction.
\end{remark}

%%%%%%%%%%%%%%%%%%%%%%%%%%%%%%%%%%%%%%%%%%%
%%%%%%%%%%%%%%%%%%%%%%%%%%%%%%%%%%%%%%%%%%%
\section{Intersecting line segments}\label{s:intersecting}

In this section we prove Theorem \ref{th:intersecting}. Under the assumptions of that theorem let us assume that $\Lambda$ is a spectrum of $\rho$ containing $0$. We will arrive at a contradiction.

From Theorem \ref{th:lines} it follows that for any $\lambda=(\lambda_1, \lambda_2) \in \Lambda$ we have
\begin{equation}\label{diff}
\lambda_2-\lambda_1 \in \frac{1}{2t+1}\ZZ.
\end{equation}
Let now $f(x) = \sum_{\lambda\in\Lambda} c_\lambda(f) e_\lambda(x)$, where $c_\lambda(f)\in \ell^2(\Lambda)$.  It follows from \eqref{diff} that
\begin{equation}\label{period}
f(x+T) = f(x),\ \ \text{ as functions in } L^2(d\rho),
\end{equation}
where $T = (2t+1, -2t-1)$. Since $\Lambda$ is assumed to be a spectrum of $\rho$ it follows that every function $f \in L^2(d\rho)$ satisfies the periodicity condition \eqref{period}.

\begin{figure}[h]
\ifdefined\SMART\resizebox{10cm}{!}{\input 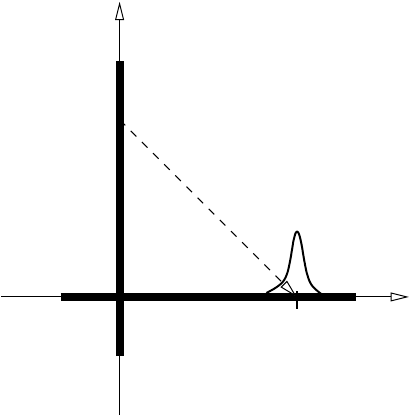_t}\else
\input crossing.pdf_t
\fi
\caption{The measure $\rho$ in the case $-1/2<t<0$, with the smooth function $f$ used in the proof.}\label{fig:crossing}
\end{figure}

Since $-\frac12 < t < 0$ we have $0 < 1+2t < 1+t < 1$ and the two points $(0, 1+2t)$ and $(1+2t, 0)$ both belong to the interior of the line segments comprising $\supp\rho$. The difference of these two points is $T$, the period vector appearing in \eqref{period}. However \eqref{period} is true for almost all $x$ so we need some more work to arrive to a contradiction.

Define the function $f \in L^2(d\rho)$ to be 0 on the vertical segment, equal to 1 at the point $(1+2t, 0) \neq (0, 0)$ and, restricted to the open horizontal segment, to be a smooth function. In other words take $f$ to be a smooth bump function supported close to $(1+2t, 0)$. (See Fig.\ \ref{fig:crossing}.) We have the $L^2(d\rho)$ expansion
\begin{equation}\label{expansion}
f(x) = \sum_{\lambda\in\Lambda} \inner{f}{e_\lambda} e_\lambda(x).
\end{equation}
But $\inner{f}{e_\lambda} = \frac12 \int_t^{t+1} f(s) e^{-2\pi i \lambda_1 s}\,ds$ and, since $f$ is smooth, we have
$$
\Abs{\inner{f}{e_\lambda}} = O(\Abs{\lambda_1}^{-10}).
$$
From Theorem 4.2 in \cite{lai2021spectral} we also have that there is exactly one $\lambda \in \Lambda$ for each $\lambda_1$ appearing (in the terminology of that paper, $\Lambda$ has multiplicity 1). From Theorem 4.1 in \cite{lai2021spectral} we have that the set of $\lambda_1$ is a frame for Lebesgue measure on $[t, t+1]$, so it must have bounded density (this follows also from the tiling property proved in Lemma \ref{l:tiling} below). These two facts imply that $\sum_{\lambda\in\Lambda} \Abs{\inner{f}{e_\lambda}} < \infty$ and from this we obtain that \eqref{expansion} holds for all $x\in\supp\rho$ (in fact, for all $x \in \RR^2$) as both sides are continuous functions. From \eqref{period} we must then have
$$
1 = f(2t+1, 0) = f(0, 2t+1) = 0,
$$
a contradiction. 

%%%%%%%%%%%%%%%%%%%%%%%%%%%%%%%%%%%%%%%%%%%
%%%%%%%%%%%%%%%%%%%%%%%%%%%%%%%%%%%%%%%%%%%
\section{Line spectra from projections}\label{s:projections}

Here we prove Theorem \ref{th:projections} and see how it can be applied to produce line spectra for some collections of measures supported on line segments.

Let $u$ be a unit vector in the straight line $L$ that goes through the origin. By our assumption on the injectivity of $\pi_L$ $\rho$-almost everywhere, any function $f(x)$ on $\supp \rho$ can be written as
$$
f(x) = \widetilde{f}(u\cdot x),
$$
for $\rho$-almost all $x$, where $\widetilde{f}:\RR\to\CC$ is supported on $u\cdot\supp\rho$. Also by the injectivity of $\pi_L$ we have that $\int_{\RR^2}\Abs{f}^2d\rho = \int_\RR\Abs{\widetilde{f}}^2d\pi_L\rho$. Hence the map $f \to \widetilde f$ is a Hilbert space isometry $L^2(\rho) \to L^2(\pi_L \rho)$ and inner products are also preserved.

Next, observe that if $\lambda u \in L$ for some $\lambda\in\RR$, then  we have that $e_{\lambda(x) u} = e^{2\pi i \lambda u \cdot x}$, $x \in \RR^2$, is constant on lines perpendicular to $L$. In other words, $e_{\lambda u} (x) = \widetilde{e_\lambda}(\pi_L(x)) = e_\lambda(\pi_L(x))$.

This implies that $\Lambda \subseteq \RR$ is a spectrum for $\pi_L\rho$ if and only if $\Lambda u \subseteq \RR^2$ is a spectrum for $\rho$, as we had to show.

\Qed

The following result  if valuable in determining when the projection measure is spectral.
\begin{othm}\label{th:abs-cont}
(\cite{dutkay2014uniformity}[Corollary 1.4])
If a measure $\mu$ on $\RR^d$, absolutely continuous with respect to Lebesgue measure, is spectral (or even has a tight frame of exponentials) then it is a constant multiple of Lebesgue measure. 
\end{othm}

Using Theorem \ref{th:abs-cont} in conjunction with Theorem \ref{th:projections} allows us to easily determine the existence of line spectra.

\begin{corollary}\label{c:two-segments}
If $\rho$ is a probability measure in the plane which consists of one-half the Lebesgue measure on the line segment from $(t, 0)$ to $(t+1, 0)$ and one-half the Lebesgue measure on the line segment from $(0, t)$ to $(0, t+1)$, then if $0 \le t\in\frac12\ZZ$ the measure $\rho$ is spectral and with a spectrum contained in the line $y = -x$.
\end{corollary}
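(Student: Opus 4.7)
The plan is to apply Theorem \ref{th:projections} with $L$ the line $y=-x$ and unit direction vector $u=(1/\sqrt{2},-1/\sqrt{2})$, reducing the problem to exhibiting a spectrum for the one-dimensional projection $\pi_L\rho$.

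For a point $(x,0)$ on the horizontal segment one has $u\cdot(x,0)=x/\sqrt{2}$, and for $(0,y)$ on the vertical segment $u\cdot(0,y)=-y/\sqrt{2}$. Thus the horizontal segment projects bijectively to $I_+=[t/\sqrt{2},(t+1)/\sqrt{2}]$ and the vertical segment to $I_-=[-(t+1)/\sqrt{2},-t/\sqrt{2}]$; since $t\ge 0$ these intervals meet in at most a single point, so $\pi_L$ is injective $\rho$-almost everywhere. Since each of the two restrictions is an affine bijection with constant Jacobian $1/\sqrt{2}$, the pushforward $\pi_L\rho$ is a positive scalar multiple of Lebesgue measure on $I_+\cup I_-$.

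By scaling invariance of spectrality it now suffices to prove that Lebesgue measure on $E_t:=[-t-1,-t]\cup[t,t+1]$ is a spectral set whenever $t\in\tfrac12\ZZ$ with $t\ge 0$. When $t=0$ this set is just the interval $[-1,1]$, which has spectrum $\tfrac12\ZZ$. When $t=n/2$ with $n\ge 1$ a positive integer, I would write $E_t=[0,1]+\{-t-1,t\}$ and note that $2t+1=n+1$ is a positive integer, so $\{0,n+1\}\oplus\{0,1,\dots,n\}\oplus(2n+2)\ZZ=\ZZ$; this lifts immediately to the translational tiling $E_t\oplus(\{0,1,\dots,n\}+(2n+2)\ZZ)=\RR$. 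By \L aba's characterization \cite{laba2001twointervals}, a union of two intervals is spectral if and only if it tiles, so $E_t$ admits some spectrum $\Lambda\subseteq\RR$.

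Rescaling $\Lambda$ by $\sqrt{2}$ yields a spectrum for $\pi_L\rho$, and Theorem \ref{th:projections} then returns a spectrum of $\rho$ contained in the line $L\colon y=-x$. The only nontrivial ingredient in this plan is the translational tiling of $\RR$ by $E_t$, which however reduces to the elementary fact that $\{0,n+1\}$ tiles $\ZZ$ for every positive integer $n$; the hypothesis $t\in\tfrac12\ZZ$ is what makes this work, since otherwise $2t+1\notin\ZZ$ and the two-point set $\{0,2t+1\}$ fails to tile $\ZZ$.
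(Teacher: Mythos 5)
Your proposal is correct and follows essentially the same route as the paper: project onto the line $y=-x$ via Theorem \ref{th:projections}, observe that the projection is a constant multiple of Lebesgue measure on a union of two intervals, and invoke the two-interval result of \cite{laba2001twointervals} together with an explicit tiling of $\RR$ (your residue-class tiling $\{0,n+1\}\oplus\{0,\dots,n\}\oplus(2n+2)\ZZ=\ZZ$ is just a reformulation of the paper's construction of tiling an interval by $2t+1$ translated copies).
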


\begin{proof}
Projecting $\rho$ onto the line $L$ given by $y = -x$ we see that the projection measure is supported on the union of two intervals
$$
U = \frac1{\sqrt2} \left( (-(t+1), -t) \cup (t, t+1) \right),
$$
and is constant on $U$. From Theorem \ref{th:projections} and Theorem \ref{th:abs-cont} it is enough to show that $U \subseteq \RR$ is spectral. See Fig.\ \ref{fig:two-sticks}.

\begin{figure}[h]
\ifdefined\SMART\resizebox{10cm}{!}{\input 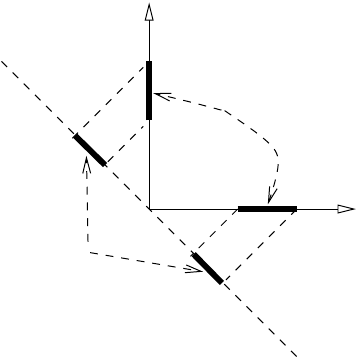_t}\else
\input two-sticks.pdf_t
\fi
\caption{We project the two line segments onto $L$. If the resulting two intervals tile the line, then they are spectral and so is the measure $\rho$.}\label{fig:two-sticks}
\end{figure}

By the result of \cite{laba2001twointervals} which verifies the Fuglede Conjecture when the set consists of two intervals, $U$ is spectral if and only if it tiles the real line. If $t=0$ the union is one interval only, which certainly tiles the line. If $t>0$ and $2t \in \ZZ$ then the gap between the two intervals is of length $2t/\sqrt2$, which is an integer multiple of the length of each of the two intervals which have length $1/\sqrt2$. In this case one can easily see that $U$ tiles the line by first showing the it can tile an interval. To tile an interval we just take $k+1=2t+1$ copies of $U$ translated at the points $0, \frac1{\sqrt2}, \ldots, \frac{k}{\sqrt2}$.

\end{proof}

\begin{remark}\label{rem:easy}
Notice that our approach provides an easy way to complete the proof in \cite{ai2023spectrality} after the essential Proposition 8 in that paper has been proved which says that any spectrum of $\rho$ in the case $t \in \QQ\setminus\Set{-1/2}$ must be contained in a straight line. In Corollary \ref{c:two-segments} we saw that a line spectrum exists if $0<t \in\frac12 \ZZ$. And if $t \in \QQ$ any spectrum must be contained in a straight line \cite[Prop.\ 8]{ai2023spectrality}. But the only line onto which $\rho$ projects {\em injectively} to a function that is constant on its support is the line $y=-x$. And the projection on that line is a union of two equal intervals which can tile the line (equivalently, is spectral in the line) only when $t \in  \frac12\ZZ$.
\end{remark}

The method of projections is quite flexible when one seeks to determine if $\rho$ is spectral with a line spectrum. Take for instance two arbitrary non-intersecting line segments in the plane, equipped with a constant multiple of Lebesgue measure each, as shown in Fig.\ \ref{fig:loaded}.

\begin{figure}[h]
\ifdefined\SMART\resizebox{10cm}{!}{\input 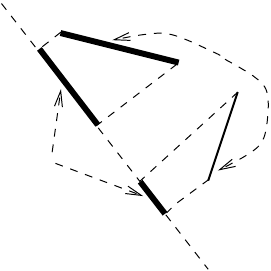_t}\else
\input loaded.pdf_t
\fi
\caption{We choose a line $L$ onto which to project $\rho$ so that the projection meeasure is constant on its support.}\label{fig:loaded}
\end{figure}
 
So $\rho$ is the sum of Lebesgue measure on one interval multiplied by some constant $c_1>0$ and Lebesgue measure on the other interval multiplied by some constant $c_2 > 0$. We then choose a line $L$ such that the projection measure $\pi_L\rho$ is constant on its support (which is one or two intervals). If such a line cannot be found (this depends on the slopes of the segments and the constants $c_1, c_2$) then $\rho$ does not have a line spectrum. If such a line is found then we examine if the support of the projection can tile the line. If it does then its spectrum is also a spectrum of $\rho$. If not then $\rho$ does not have a spectrum on this line.

And the method need not be restricted to two segments. In the cases shown in Fig.\ \ref{fig:cases} it is easy to find a line onto which $\rho$ projects to a measure constant on its support and this support is itself spectral in the line, thus implying that $\rho$ has a line spectrum. In all three examples shown we project onto the $x$-axis. The projection function is constant on its support. This support is a single interval in the first two cases from the left. For the case on the right the support consists of two intervals which together tile periodically with period $\ell$, and so we have a spectrum in this case as well.

\begin{figure}[h]
\ifdefined\SMART\resizebox{10cm}{!}{\input 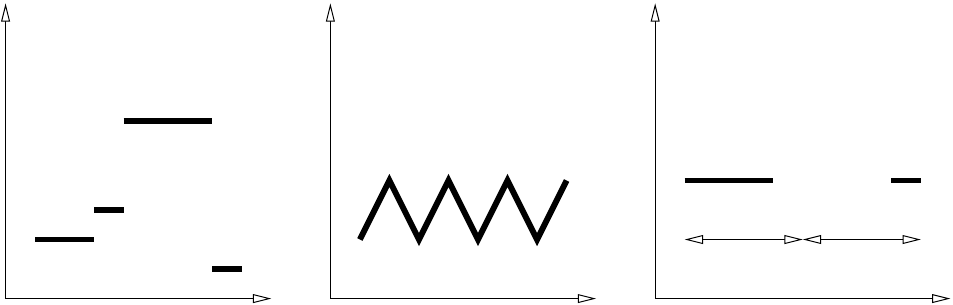_t}\else
\input cases.pdf_t
\fi
\caption{These measures all have a spectrum contained in the $x$-axis.}\label{fig:cases}
\end{figure}

%%%%%%%%%%%%%%%%%%%%%%%%%%%%%%%%%%%%%%%%%%%
%%%%%%%%%%%%%%%%%%%%%%%%%%%%%%%%%%%%%%%%%%%
\section{The case of irrational $t$}\label{s:t-irrational}

In this section we prove Theorem \ref{th:t-irrational}. In what follows $\rho$ is the probability measure $\mu\times\delta_0+\delta_0\times\mu$ where $\mu$ is one-half Lebesgue measure on the interval $[t, t+1]$, where $t \in \RR\setminus\Set{-\frac12}$ is fixed.

Our first goal is to show that for all points $\lambda=(\lambda_1, \lambda_2) \in \Lambda$ the two coordinates are comparable.

\begin{lemma}\label{l:bounds}
If $\Lambda$, with $0 \in \Lambda$, is a spectrum of $\rho$ then it is not possible to have an infinite sequence $\lambda^n=(\lambda^n_1, \lambda^n_2) \in \Lambda$ tending to infinity such that $\lambda^n_2 = o(\lambda^n_1)$ or $\lambda^n_1 = o(\lambda^n_2)$.

In other words there is a constant $K>1$ such that for all $\lambda \in \Lambda\setminus\Set{(0, 0)}$ we have
\begin{equation}\label{K-bound}
K^{-1} \Abs{\lambda_1} \le \Abs{\lambda_2} \le K \Abs{\lambda_1}.
\end{equation}
\end{lemma}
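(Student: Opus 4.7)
Since $\rho$ is symmetric under swapping the two coordinates, it suffices to show that there is no infinite sequence $\lambda^n=(\lambda_1^n,\lambda_2^n)\in\Lambda$ with $|\lambda^n|\to\infty$ and $\lambda_2^n=o(\lambda_1^n)$; the inequality \eqref{K-bound} is then the contrapositive of this non-existence. The plan is to combine the zero-set description \eqref{zero-set} restricted to $H_2$ (where Theorem \ref{th:lines} places $\Lambda$) with the frame/density properties of the projections $\pi_x\Lambda$ and $\pi_y\Lambda$ that were used already in the proof of Theorem \ref{th:intersecting}.

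The first step is to rule out a bounded subsequence of $\lambda_2^n$. By Theorem~4.1 of \cite{lai2021spectral}, $\pi_y\Lambda$ is a Fourier frame for $L^2(\mu)$ on $[t,t+1]$, and Fourier frames on a bounded interval are uniformly separated with finite upper Beurling density (via Jaffard's theorem), so $\pi_y\Lambda\cap[-M,M]$ is finite for every $M$. Since $\pi_y$ is injective on $\Lambda$ by multiplicity~$1$ (Theorem~4.2 of \cite{lai2021spectral}; see Remark~\ref{rem:z2}), only finitely many $\lambda\in\Lambda$ satisfy $|\lambda_2|\le M$, so a bounded subsequence of $\lambda_2^n$ would contradict $|\lambda^n|\to\infty$; hence $|\lambda_2^n|\to\infty$. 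Next, since $\Lambda\subseteq H_2$ by Theorem \ref{th:lines}, the exponential factor in \eqref{zero-set} equals $\pm 1$ on $\Lambda\setminus\{0\}$, which together with $\lambda^n\in(\Lambda-\Lambda)\setminus\{0\}\subseteq Z(\rho)$ yields
$$
\frac{|\sin\pi\lambda_1^n|}{|\lambda_1^n|}=\frac{|\sin\pi\lambda_2^n|}{|\lambda_2^n|}.
$$
The left-hand side is $O(1/|\lambda_1^n|)\to 0$, forcing $|\sin\pi\lambda_2^n|\to 0$. Combined with $|\lambda_2^n|\to\infty$ this gives the decomposition $\lambda_2^n=m_n+\delta_n$ with $m_n\in\ZZ$ the nearest integer, $|m_n|\to\infty$, and the quantitative estimate $|\delta_n|=O(|\lambda_2^n|/|\lambda_1^n|)\to 0$.

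The third and hardest step is to turn ``$\lambda_2^n$ close to an unbounded integer'' into an outright contradiction. Here I would combine the arithmetic constraint $\lambda_2^n-\lambda_1^n=k_n/(2t+1)$ from Theorem \ref{th:lines}, written as $\lambda_1^n=m_n+\delta_n-k_n/(2t+1)$ with $|k_n|\to\infty$, with the Parseval identity of $\Lambda$ applied to a carefully chosen test function $f\in L^2(\rho)$ — for instance a smooth bump supported on one of the two segments, whose Fourier coefficients $\inner{f}{e_\lambda}$ decay rapidly in one coordinate. The goal is to exhibit an incompatibility between the rate of decay of $\delta_n$ forced by the zero-set equation and the frame bounds on $\pi_x\Lambda$; alternatively, one can apply the same zero-set analysis to differences $\lambda^n-\lambda^{n'}\in Z(\rho)\cup\{0\}$ and iterate, exploiting the equidistribution of $k_n/(2t+1)\bmod 1$ that follows from the irrationality of $1/(2t+1)$. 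I expect this final stage, which actually closes the contradiction, to be the main technical obstacle; the first two steps are essentially a repackaging of the density and zero-set facts already in play.
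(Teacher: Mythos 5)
Your proposal does not close the argument: the ``third and hardest step'' is exactly where the content of the lemma lies, and you leave it as a plan rather than a proof. After your first two steps you only know that the escaping sequence satisfies $\lambda_2^n=m_n+\delta_n$ with $m_n\in\ZZ$ and $\delta_n\to0$; the suggested routes to a contradiction (a rapidly decaying test function against frame bounds, or equidistribution of $k_n/(2t+1)$ modulo $1$) are not carried out, and the second one would in any case use irrationality of $t$, whereas the lemma is stated for every $t\neq-\tfrac12$. The missing idea is simple and stays entirely inside the zero-set relation \eqref{zero-set}: test the escaping sequence $\nu^n=(x_n,y_n)$ (with $y_n=o(x_n)$) not only against $0\in\Lambda$ but against an \emph{arbitrary fixed} $\lambda=(\lambda_1,\lambda_2)\in\Lambda$. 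Since $\nu^n-\lambda\in Z(\rho)$, the same $o(\cdot)$ comparison you made for $\lambda=0$ forces $\sin\pi(y_n-\lambda_2)\to0$, i.e.\ $\Set{y_n-\lambda_2}\to0$ along a subsequence; together with $\Set{y_n}\to0$ this gives $\lambda_2\in\ZZ$ for \emph{every} $\lambda\in\Lambda$. Then the orthogonality of $\lambda$ with the origin, $\bigl|\frac{\sin\pi\lambda_1}{\pi\lambda_1}\bigr|=\bigl|\frac{\sin\pi\lambda_2}{\pi\lambda_2}\bigr|$, forces $\lambda_1\in\ZZ$ as well (and $\lambda_1=0$ exactly when $\lambda_2=0$), so $\Lambda\subseteq\ZZ^2$, which is impossible by Theorem 4.2 of \cite{lai2021spectral} (see Remark \ref{rem:z2}). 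This is how the paper concludes, and note it does not even need your preliminary step that $|\lambda_2^n|\to\infty$, nor the restriction of $\Lambda$ to $H_2$ from Theorem \ref{th:lines}: taking absolute values in \eqref{zero-set} already removes the unimodular factor.

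Two smaller points. First, the claim that a Fourier frame for $L^2[t,t+1]$ is uniformly separated is false in general (frames need not be separated); what you actually need -- finitely many points of $\pi_y\Lambda$ in any bounded window -- follows from the Bessel bound (finite upper density), so the conclusion survives but the justification should be corrected. Second, passing from the non-existence of the bad sequences to the two-sided bound \eqref{K-bound} requires knowing that no nonzero $\lambda\in\Lambda$ has $\lambda_1=0$ or $\lambda_2=0$; this is the multiplicity-one statement of Theorem 4.2 in \cite{lai2021spectral} and should be invoked explicitly at that point, as the paper does.
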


\begin{proof}
For any two different points $\lambda=(\lambda_1, \lambda_2), \nu=(x, y) \in \Lambda$ we have, from \eqref{zero-set}, that
\begin{equation}\label{sizes}
\Abs{\frac{\sin\pi(x-\lambda_1)}{\pi(x-\lambda_1)}} = \Abs{\frac{\sin\pi(y-\lambda_2)}{\pi(y-\lambda_2)}}.
\end{equation}
The set $\Lambda$ is infinite and there is a positive lower bound on the distance of any two of its points (this is true for all spectra), so it must be the case that its points tend to infinity.
%Let us define the angle $\theta(\lambda)$ by
%$$
%(\cos\theta(\lambda), \sin\theta(\lambda)) = \frac{\lambda}{\Abs{\lambda}}
%$$
%for $\lambda \neq 0$.
%In this language we would like to show that all $\theta(\lambda)$ are either $-\pi/4$ or $3\pi/4$. What we can show is there is no sequence of points $\lambda_n$ such that $\theta(\lambda_n)$ tends to a multiple of $\pi/2$. In other words, we cannot find a sequence $\mu=(x,y) \in \Lambda$ tending to infinity which is asymptotic to one of the coordinate axes. Being asymptotic to the $x$-axis is equivalent to $y = o(x)$ as $(x, y) \to \infty$. Let us show that this cannot happen (similarly we can show that $\mu$ cannot be asymptotic to the $y$-axis).

Fix a  point $(\lambda_1, \lambda_2) \in \Lambda\setminus\Set{(0, 0)}$ and apply \eqref{sizes} while $\nu = (x, y) \to \infty$, with $y = o(x)$, is a point of $\Lambda$.
It is clear that the left hand side is $o(\cdot)$ of the right hand side, unless $\sin\pi(y-\lambda_2) \to 0$ (for some subsequence of the points $\nu$, which we may consider to be the whole sequence), which is equivalent to $\Set{y-\lambda_2}\to 0$, where $\Set{\cdot}$ denotes the fractional part. Using the same reasoning with the point $(0, 0) \in \Lambda$ in place of $(\lambda_1, \lambda_2)$ we also obtain that $\Set{y} \to 0$. Together these two imply that $\lambda_2 \in \ZZ$. Thus we showed that all $\lambda_2 \in \ZZ$. From \eqref{sizes} with $(x, y) = (0, 0)$ it follows that if $\lambda_2 = 0$ then we also have $\lambda_1 = 0$ and if $\lambda_2 \in \ZZ\setminus\Set{0}$ then we also have $\lambda_1 \in \ZZ\setminus\Set{0}$. We have proved that $\Lambda\subseteq\ZZ^2$ which is impossible by the results in \cite[Theorem 4.2]{lai2021spectral} (but see also our Remark \ref{rem:z2}). This concludes the proof that we cannot find a sequence $\nu = (x, y) \in \Lambda$ tending to infinity with $y = o(x)$ or, by symmetry, $x = o(y)$. Also by \cite[Theorem 4.2]{lai2021spectral} it follows that for $\lambda \in \Lambda\setminus\Set{(0, 0)}$ we have $\lambda_1 \neq 0$ and $\lambda_2 \neq 0$ (multiplicity one, in the language of \cite{lai2021spectral}). From this observation and the impossibility of $\lambda_2 = o(\lambda_1)$ or $\lambda_1 = o(\lambda_2)$ the bound \eqref{K-bound} follows for some $K>1$.

\end{proof}

Next, we show that the spectrality assumption for $\rho$ leads to a certain one-dimensional tiling, which will allow us to deduce properties of the projection of $\Lambda$ to the coordinate axes.

\begin{lemma}\label{l:tiling}
If $\Lambda$ is a spectrum of $\rho$ then we have the level-2 tiling of the real line
$$
2 = \sum_{\lambda \in \Lambda} \Abs{\ft{\one_{[-\frac12, \frac12]}}}^2(x-\lambda_1)\ \ \ (x \in \RR).
$$
\end{lemma}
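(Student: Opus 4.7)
The plan is to apply Parseval's identity for the orthonormal basis $\Set{e_\lambda}_{\lambda\in\Lambda}$ of $L^2(\rho)$ to a test function supported only on the horizontal segment. Restricting a function to a single segment collapses the two-dimensional inner product $\inner{\cdot}{\cdot}_\rho$ to a one-dimensional integral over $[t,t+1]$, and after centering this interval at the origin the resulting integral produces exactly $\ft{\one_{[-\frac12,\frac12]}}$. The normalizing factor $\frac12$ coming from the identity $\mu = \frac12\,\one_{[t,t+1]}\,dx$ is what will ultimately produce the level $2$ on the left-hand side of the tiling identity.

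Concretely, fix $\xi\in\RR$ and take $f\in L^2(\rho)$ to be the function equal to $e^{-2\pi i \xi x}$ on the horizontal segment $[t,t+1]\times\Set{0}$ and vanishing on the vertical segment. Then
$$\Norm{f}^2_{L^2(\rho)} = \frac12\int_t^{t+1}\,dx = \frac12,$$
while the Fourier coefficients reduce to a single integral along the horizontal segment,
$$\inner{f}{e_\lambda}_\rho = \frac12\int_t^{t+1} e^{-2\pi i(\xi+\lambda_1)x}\,dx = \frac{1}{2}\,e^{-2\pi i(\xi+\lambda_1)(t+\frac12)}\,\ft{\one_{[-\frac12,\frac12]}}(\xi+\lambda_1),$$
the last equality coming from the substitution $u = x-(t+\frac12)$. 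Taking absolute values squared (which kills the unimodular phase factor) and applying Parseval yields
$$\frac12 = \frac14\sum_{\lambda\in\Lambda}\Abs{\ft{\one_{[-\frac12,\frac12]}}(\xi+\lambda_1)}^2.$$

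Finally, substituting $\xi = -x$ and using that $\Abs{\ft{\one_{[-\frac12,\frac12]}}}^2$ is an even function of its argument (because $\one_{[-\frac12,\frac12]}$ is symmetric about $0$) gives the claimed identity. There is no substantive obstacle in this argument: once the correct test function is identified the verification is a one-page Parseval computation, and the appearance of $\ft{\one_{[-\frac12,\frac12]}}$ is essentially forced by the requirement that the right-hand side be translation invariant, depending on $\lambda_1$ only through the difference $x-\lambda_1$.
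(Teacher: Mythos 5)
Your proof is correct and is essentially the paper's own argument: both apply Parseval for the orthonormal basis $\Set{e_\lambda}$ to a function supported only on the horizontal segment and equal there to a unimodular exponential (the paper writes it as $\phi(s)=\one_{[t,t+1]}(s)e^{2\pi i x s}$, you write $e^{-2\pi i\xi x}$ with $\xi=-x$), with the factor $\frac12$ from $\mu$ producing the level $2$. The only difference is that you carry out the centering substitution and phase-factor bookkeeping explicitly, which the paper leaves implicit.
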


\begin{proof}
Consider the function $f \in L^2(\rho)$ which is $0$ on the $y$-axis and equal to $\phi$ on $[t, t+1]\times\Set{0}$ for some $\phi \in L^2([t, t+1])$. Expanding $f$ with respect to $\Lambda$ we get
$$
2\Norm{\phi}_{L^2([t, t+1])}^2 = \sum_{\lambda \in \Lambda} \Abs{\ft{\phi}(\lambda_1)}^2.
$$
Picking $\phi(s) = \one_{[t, t+1]}(s)e^{2\pi i x s}$ for some $x \in \RR$ we get
\begin{equation}\label{2-tiling}
2 = \sum_{\lambda\in\Lambda} \Abs{\ft{\one_{[-\frac12, \frac12]}}}^2(x-\lambda_1).
\end{equation}
%In other words the function $\Abs{\ft{\one_{[-\frac12, \frac12]}}}^2$ tiles with the translates $\Lambda_1 = \Set{\lambda_1:\ (\lambda_1, \lambda_2) \in \Lambda}$ at level 2.

\end{proof}

The next Lemma shows the very crucial property of the set $\Lambda_1$, that it is of finite complexity: in any fixed-length window on the line we may only see a finite set of different patterns of $\Lambda_1$.

\begin{lemma}\label{l:finite-complexity}
The tiling in Lemma \ref{l:tiling} is of finite complexity. This means that there are finitely many different gaps among successive points in $\Lambda_1 = \Set{\lambda_1: \lambda=(\lambda_1, \lambda_2) \in \Lambda}$.
\end{lemma}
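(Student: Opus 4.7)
The plan is to argue by contradiction: assume $\Lambda_1$ admits infinitely many distinct successive gap sizes. First I would establish that gaps are bounded above. Integrating the tiling identity from Lemma~\ref{l:tiling} over an interval $I$, and using $\int g = \widehat g(0) = 1$, gives an upper bound on the density of $\Lambda_1$. Since $g(x) = O(x^{-2})$, a gap exceeding some threshold $\Delta$ would force the left-hand side of \eqref{2-tiling} to be strictly less than $2$ at the midpoint of the gap, by a dominated comparison with $\int_{|x|>L/2} x^{-2}\,dx$. Hence all gaps lie in $[0, \Delta]$, and by Bolzano--Weierstrass I extract a subsequence of distinct gaps $d_n \to d^{\ast}\in[0,\Delta]$ together with consecutive pairs $a_n < a_n + d_n$ in $\Lambda_1$.

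Each such pair lifts (by Theorem~\ref{th:lines}) to points $\lambda_n = (a_n,\, a_n + k_n/(2t+1))$ and $\nu_n = (a_n + d_n,\, a_n + d_n + k'_n/(2t+1))$ in $\Lambda$, for integers $k_n, k'_n$. The orthogonality $\nu_n - \lambda_n \in Z(\rho)$ combined with \eqref{zero-set} yields the transcendental relation
$$
(-1)^{j_n+1}\,\frac{\sin\pi d_n}{\pi d_n} \;=\; \frac{\sin\pi\bigl(d_n + j_n/(2t+1)\bigr)}{\pi\bigl(d_n + j_n/(2t+1)\bigr)},\qquad j_n := k'_n - k_n \in \ZZ.
$$
If $(j_n)$ has a bounded subsequence, I pass along it to a constant value $j_n = j$; the equation then becomes a single real-analytic equation in $d$, whose zero set is discrete, so it has only finitely many solutions in $[0, \Delta]$, contradicting the distinctness of the $d_n$.

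The remaining case is $|j_n| \to \infty$. The right-hand side is then $O(1/|j_n|) \to 0$, forcing $\sin\pi d_n \to 0$, so $d_n \to m$ for some positive integer $m \le \lfloor \Delta\rfloor$. Lemma~\ref{l:bounds} gives $|k_n|,|k'_n| = O(|a_n|)$, so $|a_n| \to \infty$. I would then translate: $\tilde\Lambda^{(n)} := \Lambda - \lambda_n$ is a spectrum of an appropriate translate of $\rho$, and by compactness (spectra are uniformly discrete in $\RR^2$) I extract a vague limit $\tilde\Lambda^\infty$, which inherits the spectrum structure. The limit contains $(0,0)$ and, by tracking those points of $\tilde\Lambda^{(n)}$ whose first coordinate stays bounded and converges to $m$, a point $(m, y)$ for some $y$. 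Applying \eqref{zero-set} to $(m, y) \in Z(\rho)$ with $\sin\pi m = 0$ forces $y \in \ZZ\setminus\Set{0}$, and Theorem~\ref{th:lines} applied to $\tilde\Lambda^\infty$ then constrains $y$ tightly (in the irrational case pinning $y=m$). An iteration of this translation-and-limit procedure at the newly produced diagonal-type points should yield a configuration of $\tilde\Lambda^\infty$ incompatible with the level-$2$ tiling of Lemma~\ref{l:tiling}.

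The main obstacle I anticipate is closing the $|j_n|\to\infty$ case: the second coordinate of $\nu_n - \lambda_n$ escapes to infinity and does not survive directly in the vague limit, so one must organize the translation-compactness argument carefully, using both the 2D structure carried by Theorem~\ref{th:lines} and the cone condition of Lemma~\ref{l:bounds}, to genuinely force a contradiction from the accumulation of diagonal-like points in $\tilde\Lambda^\infty$.
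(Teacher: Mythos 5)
Your reduction is sound up to and including the bounded-$j_n$ case: the gap bound from the tiling \eqref{2-tiling}, the lifted orthogonality relation via Theorem \ref{th:lines} and \eqref{zero-set}, and the observation that for a fixed $j$ the distinct gaps $d_n$ would be infinitely many zeros in $[0,\Delta]$ of a non-trivial entire function are all correct, and this part parallels the paper's own mechanism (discreteness of $Z(\rho)$ on each line $y-x=k/(2t+1)$). The genuine gap is the case $\Abs{j_n}\to\infty$, which you leave open and which your translation-and-vague-limit sketch does not close: as you yourself note, the second coordinate of $\nu_n-\lambda_n$ escapes to infinity and leaves no trace in the limit configuration, and the claimed iteration toward a contradiction with the level-$2$ tiling of Lemma \ref{l:tiling} is never carried out. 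As written, the proof is incomplete.

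The missing idea is much simpler and is exactly how the paper excludes this case: the difference $\nu_n-\lambda_n=(d_n,\ d_n+j_n/(2t+1))$ is a nonzero point of the translated set $\Lambda-\lambda_n$, which is again a spectrum of $\rho$ containing the origin, so Lemma \ref{l:bounds} applies to \emph{it}, not just to the original points. Since $0<d_n\le\Delta$, this gives $\Abs{d_n+j_n/(2t+1)}\le K d_n\le K\Delta$, hence $\Abs{j_n}\le \Abs{2t+1}(K+1)\Delta$, and the troublesome unbounded case never arises; you invoked Lemma \ref{l:bounds} only for $\lambda_n$ and $\nu_n$ themselves, which yields the useless bound $\Abs{k_n}=O(\Abs{a_n})$. (As in the paper, one treats the constant of Lemma \ref{l:bounds} as serving for the translates of $\Lambda$ as well, since each translate is a spectrum containing $0$.) Once $j_n$ is bounded, your own analytic argument finishes the proof, and this is essentially the paper's direct route: the gap bound and the cone condition confine each consecutive difference to a compact region that meets only finitely many lines $y-x=k/(2t+1)$, and on each such line the zeros of $\ft{\rho}$ form a discrete set, so only finitely many gap values can occur --- no compactness or limiting procedure is needed.
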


\begin{proof}
Let us write the set $\Lambda= \Set{\lambda^n: n \in \ZZ}$ in increasing order of the first coordinates as follows
$$
\cdots \le \lambda^{-1}_1 \le \lambda^0_1 = 0 \le \lambda^1_1 \le \lambda^2_1 < \cdots.
$$
Our first goal is to show that $\lambda^1_1$ can take only finitely many values. 

By the tiling property, there is an absolute constant $C$ such that $\lambda^1_1 \le C$, so $\lambda^1$ belongs to the vertical strip
$$
V = \Set{(x, y): 0\le x\le C}.
$$
From \eqref{K-bound} we also know that $\lambda^1$ belongs to the union of two sectors
$$
S = \Set{(x, y): x\ge 0,\ K^{-1} x \le \Abs{y} \le K x}.
$$
See Fig.\ \ref{fig:regions}.

\begin{figure}[h]
\ifdefined\SMART\resizebox{10cm}{!}{\input 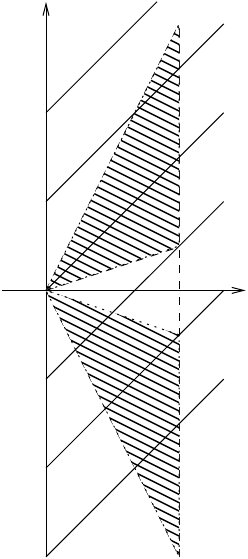_t}\else
\input regions.pdf_t
\fi
\caption{The shaded region is $V \cap S$. Only finitely many of the lines $y-x=k/(2t+1)$, $k \in \ZZ$, intersect this region and on each of these lines only an interval is contained in the region.}\label{fig:regions}
\end{figure}

Finally we already know that we must have
$$
\lambda^1_2 - \lambda^1_1 = \frac{k}{2t+1}
$$
for some $k \in \ZZ$ (remember that we have assumed $t \neq -1/2$), and it follows that $\Abs{k}$ must be bounded because only finitely many of the straight lines $y-x=k/(2t+1)$ intersect $V \cap S$. For each of the finitely many eligible values of $k$ there are finitely many points on the line $y-x=k/(2t+1)$ which are zeros of $\ft{\rho}$ and belong to $V$, since on each such line the zeros of $\ft{\rho}$ are a discrete set as they correspond the values of $x$ such that
$$
\frac{\sin \pi x}{\pi x} = \pm \frac{\sin \pi (x+\frac{k}{2t+1})}{\pi (x+\frac{k}{2t+1})},
$$
and these are zeros of two entire functions of $x$ that are not identically zero.
It follows that the possible locations for $\lambda^1_1$ are finitely many. Let us call the set of these locations $L \subseteq \RR$.

By translating $\Lambda$ by $-\lambda^n$ we obtain that the only possible values for $\lambda^{n+1}_1-\lambda^n_1$ are again the set $L$. We have proved that the tiling \eqref{2-tiling} is a tiling of finite complexity.

\end{proof}

In the next Lemma we use a result \cite{iosevich2013periodicity} which says that tilings of finite complexity, whose set of translates has a spectral gap, are necessarily periodic. This was first used \cite{iosevich2013periodicity} to prove that spectra of bounded subsets of $\RR$ are necessarily periodic.

\begin{lemma}\label{l:periodic}
The projections of $\Lambda$ onto the $x$- and $y$-axes, $\Lambda_1$ and $\Lambda_2$, are periodic sets, with periods in $\frac12\ZZ$. In other words, there are positive $T_1, T_2 \in \frac12 \ZZ$ such that $\Lambda_1 = \Lambda_1+T_1$ and $\Lambda_2 = \Lambda_2 + T_2$.
\end{lemma}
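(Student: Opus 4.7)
The plan is to convert the level-2 tiling from Lemma \ref{l:tiling} into a spectral-gap statement for the counting measure on $\Lambda_1$ and then invoke the periodicity theorem of \cite{iosevich2013periodicity}. Setting $g(\xi) := |\ft{\one_{[-1/2, 1/2]}}(\xi)|^2 = \left(\frac{\sin \pi \xi}{\pi \xi}\right)^2$ and $\mu_1 := \sum_{\lambda \in \Lambda} \delta_{\lambda_1}$, the multiplicity-one property of $\Lambda$ (see Remark \ref{rem:z2}) ensures that $\mu_1$ is a genuine Dirac sum over the distinct points of $\Lambda_1$, and Lemma \ref{l:tiling} becomes the pointwise identity $g \ast \mu_1 \equiv 2$ on $\RR$.

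Taking Fourier transforms of both sides as tempered distributions gives $\ft g \cdot \ft{\mu_1} = 2 \delta_0$. Now $\ft g$ is the triangle function, supported on $[-1, 1]$ and strictly positive on the open interval $(-1, 1)$, so $\ft{\mu_1}$ must vanish on $(-1, 1) \setminus \Set{0}$; this is a spectral gap of the Dirac sum $\mu_1$. Combined with the finite local complexity of $\Lambda_1$ supplied by Lemma \ref{l:finite-complexity}, the hypotheses of the periodicity theorem of \cite{iosevich2013periodicity} are met, and I conclude that there exist $T_1 > 0$ and a finite set $A_1 \subset [0, T_1)$ with $\Lambda_1 = A_1 + T_1 \ZZ$.

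To see $T_1 \in \frac12 \ZZ$, I use the tiling once more: Plancherel gives $\int g = 1$, so the level-2 tiling equation forces the asymptotic density of $\Lambda_1$ to equal $2$. A periodic set $A_1 + T_1 \ZZ$ has density $|A_1|/T_1$, hence $T_1 = |A_1|/2 \in \frac12 \ZZ$. For $\Lambda_2$ I would apply the identical argument, noting that the symmetry $(x, y) \mapsto (y, x)$ of $\rho$ immediately yields the analogues of Lemma \ref{l:tiling} and Lemma \ref{l:finite-complexity} for $\Lambda_2$, so $T_2 \in \frac12 \ZZ$ by the same reasoning.

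The main difficulty I anticipate is matching the precise hypotheses of the periodicity theorem in \cite{iosevich2013periodicity}: what one needs is a discrete Dirac sum with both finite local complexity and a spectral gap around the origin, and both are verified above — the sharp spectral gap coming directly from the explicit triangle-function form of $\ft g$ and its strict positivity on $(-1, 1)$. Once the citation is justified, the remainder of the argument is essentially bookkeeping on densities.
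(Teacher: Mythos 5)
Your proposal is correct and follows essentially the same route as the paper: the level-2 tiling plus the finite-complexity Lemma \ref{l:finite-complexity}, a spectral gap for $\delta_{\Lambda_1}$ coming from the triangle function $\ft{g}=\one_{[-\frac12,\frac12]}*\one_{[-\frac12,\frac12]}$, the periodicity theorem of \cite{iosevich2013periodicity}, and a density count over one period to get $T_1,T_2\in\frac12\ZZ$. The only difference is that you justify the spectral gap by a bare-hands distributional identity $\ft{g}\cdot\ft{\mu_1}=2\delta_0$ (which needs a word about the temperateness/bounded density of $\mu_1$ and the non-smoothness of $\ft g$ at $0,\pm1$), whereas the paper outsources exactly this step to \cite[Theorem 4.1]{kolountzakis2016non}, which yields $\supp\ft{\delta_{\Lambda_1}}\subseteq\Set{0}\cup\Set{\ft g=0}$.
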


\begin{proof}
From Lemma \ref{l:finite-complexity} we know that the tiling \eqref{2-tiling} is a tiling of finite complexity. It was proved in \cite{iosevich2013periodicity}, though not stated precisely in this form (see \cite[Theorem 5.1]{kolountzakis2016non}), that in any tiling of finite complexity with a spectral gap, the set of translates is periodic. To have a spectral gap, the distribution $\ft{\delta_{\Lambda_1}}$ needs to vanish on an interval. To see that this is the case here we need to apply \cite{kolountzakis2016non}[Theorem 4.1] with
$$
f = \Abs{\ft{\one_{[-\frac12, \frac12]}}}^2 \in L^1(\RR).
$$
By the conclusion of that result we obtain that
$$
\supp{\ft{\delta_{\Lambda_1}}} \subseteq \Set{0} \cup \Set{\ft{f} = 0}.
$$
But $\ft{f} = \one_{[-\frac12, \frac12]}*\one_{[-\frac12, \frac12]}$ so that $\Set{\ft{f} = 0}$ is the complement of $(-1, 1)$, and we see that $\ft{\delta_{\Lambda_1}}$ vanishes in any proper subinterval of $(0, 1)$. We conclude that $\Lambda_1$ is periodic. Let $T_1>0$ be one of its periods. By symmetry there must also exist a positive real number $T_2$ such that $\Lambda_2 = \Lambda_2 + T_2$.

It is easy to see from the tilings that $T_1, T_2 \in \frac12\ZZ$. Indeed, since the integral of $\Abs{\ft{\one_{[-\frac12, \frac12]}}}^2$ is 1 it follows that the integral of the right hand side of \eqref{2-tiling} over a period, which is $2 T_1$ by looking at the left hand side of \eqref{2-tiling}, must be equal to the total number of copies in a period, call it $n_1$, which gives $2T_1 = n_1$, i.e. $T_1 \in \frac12 \ZZ$, and similarly $T_2 \in \frac12 \ZZ$.

\end{proof}

In the next Lemma we exploit the fact that exponentials with frequencies on the line $y=x$ cannot span $L^2(\rho)$ as the projections of the two segments on that line overlap.

\begin{lemma}\label{l:non-diagonal}
If $\Lambda$ is a spectrum for $\rho$ then $\Lambda$ has infinitely many points not on the line $y=x$.
\end{lemma}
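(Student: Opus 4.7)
The plan is to exploit the reflection symmetry of $\rho$ exchanging the two segments. The key observation is that any exponential $e_\lambda$ whose frequency lies on the diagonal, $\lambda=(c,c)$, cannot distinguish the two segments: for $s \in [t,t+1]$,
$$
e_\lambda(s,0) = e^{2\pi i c s} = e_\lambda(0,s).
$$
So every diagonal exponential belongs to the ``symmetric'' subspace
$$
V = \Set{f \in L^2(\rho):\ f(s,0) = f(0,s) \text{ for a.e.\ } s \in [t,t+1]}.
$$

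The main steps I would carry out are as follows. First, I would verify that $V$ is a closed subspace of $L^2(\rho)$ and identify its orthogonal complement $V^\perp$ as the ``antisymmetric'' subspace $\Set{f:\ f(s,0)=-f(0,s)}$. (The possible intersection of the two segments at the origin is irrelevant since $\rho(\Set{(0,0)})=0$.) Via the map $f \mapsto f(\cdot,0)$, both $V$ and $V^\perp$ are isometric up to a scalar to $L^2([t,t+1])$, so in particular $V^\perp$ is infinite-dimensional. Second, I would note that by the displayed identity above, every diagonal exponential lies in $V$, and therefore its orthogonal projection onto $V^\perp$ is zero.

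Now suppose for contradiction that only finitely many points $\lambda^{(1)},\ldots,\lambda^{(N)} \in \Lambda$ lie off the diagonal. Since $\Lambda$ is a spectrum, every $g \in V^\perp \subseteq L^2(\rho)$ expands as
$$
g = \sum_{\lambda \in \Lambda} \frac{\inner{g}{e_\lambda}}{\Norm{e_\lambda}^2}\, e_\lambda.
$$
Applying the orthogonal projection $P_{V^\perp}$ to both sides, and using that $P_{V^\perp} e_\lambda = 0$ for every $\lambda$ on the diagonal, yields
$$
g = \sum_{j=1}^{N} \frac{\inner{g}{e_{\lambda^{(j)}}}}{\Norm{e_{\lambda^{(j)}}}^2}\, P_{V^\perp} e_{\lambda^{(j)}},
$$
exhibiting $V^\perp$ as the span of a finite set, contradicting its infinite-dimensionality.

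I do not anticipate any genuinely delicate step: the whole argument is a finite-dimensional obstruction produced by an obvious symmetry. The only thing worth being careful about is verifying that $V^\perp$ really is infinite-dimensional, which reduces to the trivial fact that $L^2([t,t+1])$ is, and confirming that the diagonal exponentials really do lie in $V$ (not just up to a measure-zero set), which is the one-line computation above.
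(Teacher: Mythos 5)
Your proof is correct and rests on exactly the same idea as the paper's: diagonal exponentials $e_{(c,c)}$ are symmetric under $(x,y)\mapsto(y,x)$, so they cannot reach the antisymmetric subspace, which is infinite-dimensional (isometric, up to a constant, to $L^2([t,t+1])$). Your version merely spells out, via the projection $P_{V^\perp}$ applied to the spectral expansion, the finite-dimensional obstruction that the paper states more briefly.
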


\begin{proof}
The exponentials of the form $e_{(\lambda, \lambda)}(x, y) = e^{2\pi i (\lambda x + \lambda y)}$ are constant along any straight line that is perpendicular to the line $y=x$. Thus the exponentials with frequencies in $\Lambda \cap \Set{(x, y): y=x}$ can only generate part of the subspace of $L^2(\rho)$ which consists of functions that are symmetric with respect to the mapping $(x, y) \to (y, x)$. Clearly this subspace has infinite co-dimension in $L^2(\rho)$ (its orthogonal complement contains all functions which are negated under the transformation $(x, y) \to (y, x)$), so infinitely many exponentials are required outside $y=x$ if the exponentials with frequencies in $\Lambda$ are to generate $L^2(\rho)$.
\end{proof}

The irrationality of $t$ is exploited next to show that there is at most one point of the spectrum on each line $y-x=k/(2t+1)$ for $k \neq 0$.

\begin{lemma}\label{l:only-one-per-line}
If $0 \in \Lambda$ is a spectrum of $\rho$ and $t \notin \QQ$ then on each line of the form
$$
y-x = \frac{k}{2t+1},\ \ \ \text{ for some } k \in \ZZ,
$$
there is at most one point of $\Lambda$.
\end{lemma}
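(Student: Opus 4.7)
My strategy is proof by contradiction: assume two distinct points $\lambda,\lambda'\in\Lambda$ lie on the same line $y-x = k/(2t+1)$, and derive a contradiction from the three zero conditions $\lambda,\lambda',\lambda-\lambda'\in Z(\rho)$ (the last two follow from $0\in\Lambda$ and orthogonality).

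The first step is to analyze the difference. Since both points lie on a line of slope one, $\lambda-\lambda'=(d,d)$ for some $d\in\RR\setminus\{0\}$. Substituting $(d,d)$ into the defining equation \eqref{zero-set} of $Z(\rho)$, the exponential prefactor collapses to $1$ (since $\lambda_1-\lambda_2$ vanishes for the difference), so the relation reduces to $\sin\pi d=0$, forcing $d\in\ZZ\setminus\{0\}$.

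Next I would exploit the two conditions $\lambda,\lambda'\in Z(\rho)$. Because $\lambda_1-\lambda_2=\lambda'_1-\lambda'_2 = -k/(2t+1)$, the exponential prefactor in each equation reduces to $e^{-\pi i k}=(-1)^k$. Using $\lambda_i=\lambda'_i+d$ with $d\in\ZZ$, the identity $\sin\pi(\lambda'_i+d)=(-1)^d\sin\pi\lambda'_i$ introduces a common factor of $(-1)^d$ on both sides of the $\lambda$-equation, which cancels. Dividing the resulting equation by the $\lambda'$-equation (under the hypothesis that the sines do not vanish) gives the clean ratio
$$\frac{\lambda'_1}{\lambda'_1+d}=\frac{\lambda'_2}{\lambda'_2+d},$$
which after cross-multiplication forces $\lambda'_1=\lambda'_2$. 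This contradicts $\lambda'_2-\lambda'_1=k/(2t+1)\neq 0$ for $k\neq 0$.

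The main obstacle, and the only place where the irrationality of $t$ is essential, is ruling out the degenerate case $\sin\pi\lambda'_1=0$. If this holds, then the $\lambda'$-equation forces $\sin\pi\lambda'_2=0$ as well, so both coordinates are integers, making $k/(2t+1)=\lambda'_2-\lambda'_1$ an integer; since $t\notin\QQ$ implies $2t+1\notin\QQ$, this is possible only for $k=0$. For the remaining case $k=0$ (the diagonal line $y=x$), the conclusion follows by combining with Lemma \ref{l:non-diagonal}: if a nontrivial diagonal point $(n,n)\in\Lambda$ existed, repeating the orthogonality computation above with $(n,n)$ in place of $\lambda'$ and any off-diagonal $\nu\in\Lambda$ (guaranteed by Lemma \ref{l:non-diagonal}) in place of $\lambda$ would force $\nu_1=\nu_2$, contradicting the choice of $\nu$.
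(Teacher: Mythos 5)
Your route is essentially the paper's: reduce to the zero-set equation \eqref{zero-set}, note that two points of $\Lambda$ on the same line differ by an integer diagonal vector, cancel the common prefactor $(-1)^k$ and the common factor $(-1)^d$, divide the two equations, and let irrationality of $t$ kill the degenerate case where the sines vanish. For $k\neq 0$ your execution is correct and matches the paper's equations \eqref{ortho}--\eqref{ortho1} step for step (including the observation that if one coordinate is an integer then so is the other, whence $k/(2t+1)\in\ZZ$ forces $t\in\QQ$).

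The one place where your write-up would fail if executed literally is the $k=0$ case. Putting $(n,n)$ ``in place of $\lambda'$'' does not work: the zero-set equation for $(n,n)$ reads $\frac{\sin\pi n}{\pi n}=-\frac{\sin\pi n}{\pi n}$, i.e.\ the vacuous identity $0=0$, so there is nothing to divide by; moreover the difference $\nu-(n,n)$ of your two chosen spectrum points is not of the diagonal form $(d,d)$, so the $(-1)^d$-cancellation step has no direct analogue. The correct pairing, which is what the paper does via \eqref{ortho} and \eqref{ortho1}, is to divide the zero-set equations of the two points $\nu$ and $\nu-(n,n)$: both lie in $Z(\rho)$ (since $0$, $(n,n)$ and $\nu$ are distinct points of $\Lambda$), they carry the same prefactor because they lie on the same line $y-x=k'/(2t+1)$ with $k'\neq 0$ ($\nu$ being off the diagonal, by Theorem \ref{th:lines}), and they differ by the integer diagonal vector $(n,n)$. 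Dividing then gives $\frac{\nu_1}{\nu_1-n}=\frac{\nu_2}{\nu_2-n}$, hence $\nu_1=\nu_2$ since $n\neq 0$, which is exactly the contradiction you assert; the degenerate case is excluded here because $\nu_1,\nu_2\in\ZZ$ would make $\nu_2-\nu_1=k'/(2t+1)$ a nonzero integer, again forcing $t\in\QQ$. So the idea and the conclusion are right, but the pair of equations you name is the wrong one; with that local repair your argument coincides with the paper's proof.
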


\begin{proof}
It is easy to see from \eqref{zero-set} that
\begin{equation}\label{0-line}
\Set{\ft{\rho}=0} \cap \Set{(x, x): x \in \RR} = \Set{(n, n): 0\neq n\in\ZZ}.
\end{equation}
Since $\Lambda-\Lambda \subseteq \Set{\ft{\rho}=0} \cup \Set{0}$ we deduce that any two different points of $\Lambda$ on the same line
$$
L_k = \Set{(x, y): x-y = \frac{k}{2t+1}},\ \ \ k\in\ZZ,
$$
must differ by a vector of the form $(n, n)$, with $0 \neq n \in \ZZ$.

Let us first assume $k \neq 0$.

We now show that if $t$ is irrational then there is at most one point of $\Lambda$ on each line $L_k$ with $k \neq 0$. Indeed, suppose that
$$
(\lambda_1, \lambda_2), (\lambda_1+\nu, \lambda_2+\nu) \in L_k \cap \Lambda, \text{ for some } \nu \in \ZZ.
$$
Then we have
\begin{align}
e^{\pi i (\lambda_1-\lambda_2)(2t+1)} \frac{\sin\pi\lambda_1}{\pi\lambda_1} &= - \frac{\sin\pi\lambda_2}{\pi\lambda_2}\label{ortho} \\
e^{\pi i (\lambda_1-\lambda_2)(2t+1)} \frac{\sin(\pi\lambda_1+\pi\nu)}{\pi\lambda_1+\pi\nu} &= - \frac{\sin(\pi\lambda_2+\pi\nu)}{\pi\lambda_2+\pi\nu}. \label{ortho1}
\end{align}
Since $\sin(x+\pi\nu) = (-1)^\nu\sin x$ for $x \in \RR$ we can, assuming that both $\lambda_1$ and $\lambda_2$ are not integers, divide \eqref{ortho1} by \eqref{ortho} and get
$$
1+\frac{\nu}{\lambda_1} = 1+\frac{\nu}{\lambda_2}.
$$
Since $k\neq 0$ it follows that $\lambda_1 \neq \lambda_2$ so the last equation implies $\nu=0$, as we had to show. It remains to ensure that $\lambda_1, \lambda_2 \notin \ZZ$. If one of them is an integer then so is the other, by \eqref{ortho}. But if they are both integers then by $\lambda_1-\lambda_2 = \frac{k}{2t+1}$ we obtain that $t \in \QQ$, contrary to our assumption.

Let us now show the result for $k=0$.

Assume that $\nu \in \ZZ\setminus\Set{0}$ and that $(-\nu, -\nu) \in \Lambda$. From Lemma \ref{l:non-diagonal} there exists at least one point of $\Lambda$ outside the line $y=x$. Let us call this point $(\lambda_1, \lambda_2)$. We immediately have equations \eqref{ortho} and \eqref{ortho1} and by the same argument as above we arrive at a contradiction.

\end{proof}

\begin{proof}[Proof of Theorem \ref{th:t-irrational}]
Assuming $\Lambda$ to be a spectrum of $\rho$, with $0 \in \Lambda$, we obtain from Lemma \ref{l:periodic} that
\begin{equation}\label{periodic-sets}
\Lambda_1 = T_1 \ZZ + A,\ \ \Lambda_2 = T_2 \ZZ + B,
\end{equation}
for some positive $T_1, T_2 \in \frac12\ZZ$ and some finite sets $A \subseteq [0, T_1)$ and $B \subseteq [0, T_2)$.

If $\lambda = (\lambda_1, \lambda_2) \in \Lambda$ then, by Theorem \ref{th:lines}, there is $k \in \ZZ$ such that
$$
\lambda_2 - \lambda_1 = \frac{k}{2t+1}.
$$
From \eqref{periodic-sets} we have that $\lambda_1 = mT_1+a$ and $\lambda_2 = n T_2 +b$, for some integers $m, n$ and some $a \in A$, $b \in B$ so, taking fractional parts,
$$
\Set{2\lambda_2-2\lambda_1} = \Set{2b-2a} = \Set{\frac{2k}{2t+1}}.
$$
The quantity $\Set{2b-2a}$ takes only finitely many values as $A, B$ are finite sets. For different $k$ the values of $\Set{\frac{2k}{2t+1}}$ are different since $2/(2t+1)$ is irrational. This means that there are at most finitely many $k$'s for all points of $\Lambda$ which contradicts Lemma \ref{l:only-one-per-line}.

\end{proof}

\bibliographystyle{alpha}
%%%%% replace with the right location for your system
\bibliography{mk-bibliography.bib}

\end{document}